\documentclass[11pt,thmsa]{article}%
\usepackage{amssymb,amsmath,latexsym}
\usepackage{amsfonts}
\usepackage{txfonts}
\usepackage{dsfont}
\usepackage{amssymb}
\usepackage{graphicx}%
\topmargin -0.8cm \textwidth 14.5cm \textheight 22cm
\newtheorem{theorem}{Theorem}[section]

\newtheorem{definition}[theorem]{Definition}

\newtheorem{lemma}[theorem]{Lemma}

\newenvironment{proof}[1][Proof]{\noindent\textbf{#1.} }{\ \rule{0.5em}{0.5em}}

\begin{document}
\title{Clifford-Wolf homogeneous Randers spaces\footnote{Supported by NSFC (no.10971104) and SRFDP of China.}}
\author{  Shaoqiang Deng$^{1}$ and Ming Xu$^2$
\thanks{Corresponding author. E-mail: mgxu@math.tsinghua.edu.cn}\\
$^1$School of Mathematical Sciences and LPMC\\
Nankai University\\
Tianjin 300071, P. R. China\\
$^2$Department of Mathematical Sciences\\
Tsinghua University\\
Beijing 100084, P. R. China}
%\address{}
\date{}
\maketitle
\begin{abstract}
A Clifford-Wolf translation of a connected Finsler space  is an
isometry which moves each point the same distance. A Finsler space
$(M, F)$ is called Clifford-Wolf homogeneous if for any two points
$x_1, x_2\in M$ there is a Clifford-Wolf translation $\rho$ such
that $\rho(x_1)=x_2$. In this paper, we give a complete
classification of connected simply connected Clifford-Wolf homogeneous Randers
spaces.

\textbf{Mathematics Subject Classification (2000)}: 22E46, 53C30.

\textbf{Key Words}: Finsler spaces, Clifford-Wolf translations, Clifford-Wolf homogeneous Randers spaces, Killing vector fields.
\end{abstract}

\maketitle
\section{Introduction}
Let $(M, F)$ be a connected Finsler space and $d$ its distance
function. An isometry $\rho$ of $(M, F)$ is called  a Clifford-Wolf
translation (CW-translation for short) if the function $d(x, \rho
(x))$ is constant on $M$. The Finsler space $(M, F)$ is called
Clifford-Wolf homogeneous (CW-homogeneous) if for any two points
$x_1, x_2\in M$, there is a CW-translation $\rho$ such that
$\rho(x_1)=x_2$.
%It is called restrictively Clifford-Wolf
%homogeneous (restrictively CW-homogeneous) if for any $x\in M$,
%there is a neighborhood of $x$, such that for any $x'$ in that
%neighborhood, there is a CW-translation $\rho$ such that
%$\rho(x)=x'$.

These definitions are  natural generalizations of  the related
notions in Riemannian geometry. CW-translations have been studied
extensively in Riemannian geometry, due to its relevance in the
classification of space forms. Let $(N, Q_1)$ be a connected
complete Riemannian manifold with constant sectional curvature $k$.
Then $(N, Q_1)$ is isometric to a quotient manifold $(M, Q)/\Gamma$,
where $(M, Q)$ is a connected simply connected complete Riemannian
manifold with constant sectional curvature $k$, and $\Gamma$ is a
discrete discontinuous subgroup of the full group of isometries of
$(M, Q)$ which acts freely on $M$. It is  well-known  that a
connected simply connected complete Riemannian manifold of constant
curvature is homogeneous. However, the quotient manifold $(M,
Q)/\Gamma$ is no longer homogeneous in general. J. A. Wolf proved in
\cite{WO60} that it is homogeneous if and only of $\Gamma$ consists
of CW-translations. Thus the classification of homogeneous space
form is reduced to the study of CW-subgroups of the full group of isometries.
This formulation was generalized to symmetric Riemannian spaces by
J. A. Wolf in \cite{WO62}; see \cite{FR63, OZ74} for other proofs.

Recently, connected simply connected Riemannian CW-homogeneous manifolds were
classified by Berestovskii and  Nikonorov in \cite{BN081, BN082,
BN09}. Their list consists of the euclidean spaces, the
odd-dimensional spheres with constant curvature, the connected
simply connected compact simple Lie groups with bi-invariant
Riemannian metrics,  and the direct products of the above Riemannian manifolds.
The purpose of this paper is to give a classification of connected simply
connected CW-homogeneous Randers spaces. Our main result is the
following
\begin{theorem}\label{main}
A connected  simply connected Randers space $(M,F)$ is
CW-homogeneous if and only if $M$ is a product of odd dimensional
spheres, connected simply connected compact simple Lie groups and a euclidian
space, and the Finsler metric $F$ has the  navigation data $(h,W)$ such that $(M,h)$ is
CW-homogeneous, i.e., $h$ is a Riemannian product of  Riemannian  metrics on spheres of constant
curvature, bi-invariant metrics on Lie groups and
a flat metric on the euclidian space with respect to the product
decomposition of $M$, and $W$ is a Killing vector field of constant
length with $||W||_h<1$.
\end{theorem}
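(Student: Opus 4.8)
The plan is to prove the theorem as the equivalence of two statements, together with an identification step. Statement (A): the Randers space $(M,F)$ is CW-homogeneous. Statement (B): the Riemannian space $(M,h)$ attached to the navigation data $(h,W)$ is CW-homogeneous and $W$ is a Killing field of $h$ of constant length with $||W||_h<1$. Once (A) $\Leftrightarrow$ (B) is in hand, the explicit product description of $(M,h)$ is read off from the Berestovskii--Nikonorov classification \cite{BN081,BN082,BN09} of connected simply connected Riemannian CW-homogeneous spaces, whose list is exactly the one appearing in the statement. Throughout I would use two known facts about Randers metrics in navigation form: that an isometry of $(M,F)$ is precisely an isometry of $(M,h)$ whose differential preserves $W$, and the Zermelo navigation description of $F$.

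The main analytic device is a displacement identity for a Killing wind. Assuming $W$ is Killing with flow $\varphi_t$ by $h$-isometries, I would establish that for all $x,y$,
\[
d_F(x,y)=T \iff d_h(x,\varphi_{-T}(y))=T,
\]
with $T=d_F(x,y)$ the least nonnegative solution; this follows from the substitution $\tilde\gamma(t)=\varphi_{-t}(\gamma(t))$, which turns an $F$-unit-speed trajectory $\gamma$ into an $h$-unit-speed curve with the same endpoints up to the wind shift $\varphi_{-T}$, using that $\varphi_t$ preserves both $h$ and $W$. The identity yields a bijection between the two relevant classes of motions: if $\rho$ is an $F$-isometry preserving $W$ with constant $F$-displacement $\delta$, then $\tilde\rho:=\varphi_{-\delta}\circ\rho$ is an $h$-isometry preserving $W$ with constant $h$-displacement $\delta$, and conversely $\varphi_{\delta}\circ\psi$ recovers an $F$-CW-translation from a $W$-preserving $h$-CW-translation $\psi$ of displacement $\delta$. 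Here one uses that $\rho$ preserves $W$, hence commutes with $\varphi_t$, so the compositions are again isometries of the respective metrics.

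For necessity I would first note that a CW-homogeneous $(M,F)$ is homogeneous, so by the known homogeneity criterion for navigation-form Randers metrics, $(M,h)$ is homogeneous (hence complete) and $W$ is a Killing field of constant length $||W||_h=c<1$. Then, given $p,q$, I would run an intermediate-value argument on $g(s)=d_F(p,\varphi_s(q))-s$: one has $g(0)=d_F(p,q)\ge0$, while the triangle inequality together with $d_h(q,\varphi_u(q))\le c|u|$ and the displacement identity give $g(s)\le (d_h(p,q)-s)/(1+c)\to-\infty$, so continuity produces $s^*$ with $g(s^*)=0$. Choosing an $F$-CW-translation $\rho$ with $\rho(p)=\varphi_{s^*}(q)$ of displacement $s^*$, the associated $\tilde\rho=\varphi_{-s^*}\circ\rho$ is an $h$-CW-translation sending $p$ to $q$; as $p,q$ are arbitrary, $(M,h)$ is CW-homogeneous, and the Berestovskii--Nikonorov list identifies $M$ and $h$.

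For sufficiency the same identity runs in reverse, and the analogous intermediate-value argument on $g(s)=d_h(p,\varphi_{-s}(q))-s$ reduces $F$-CW-transitivity to the transitivity of the group $C$ of $h$-CW-translations preserving $W$. I expect this last point to be the principal obstacle, requiring the explicit geometry of the spaces in the list. On the Euclidean factor $C$ contains all parallel translations; on a compact simple Lie group with bi-invariant metric the right translations $x\mapsto xb$ are CW-translations commuting with the left-invariant wind flow and act transitively; the delicate case is the odd-dimensional sphere, where the CW-translations are the equal-angle rotations $e^{\alpha J}$ for orthogonal complex structures $J$, and one must show that, after absorbing the freedom of the wind shift $\varphi_s$, one can select $J$ commuting with the fixed Hopf structure $J_0$ determined by $W$ so as to join the given pair. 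Verifying this compatibility on each sphere factor, and then assembling the factors into a single CW-translation of the Riemannian product, is where the bulk of the geometric work lies.
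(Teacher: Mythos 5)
Your overall strategy --- the displacement identity $d_F(x,y)=T \Leftrightarrow d_h(x,\varphi_{-T}(y))=T$ for a Killing wind, the induced correspondence $\rho\leftrightarrow\varphi_{-\delta}\circ\rho$ between $F$-CW-translations and $W$-preserving $h$-CW-translations, and the intermediate-value argument --- is a genuinely different route from the paper's, which instead passes through restrictive CW-homogeneity, an interrelation theorem between small CW-translations and Killing fields of constant length (Theorem \ref{new-interrelation} and Theorem \ref{2}), and a contradiction argument using the transitivity of the centralizer of a constant-length Killing field on a symmetric space. But there is a genuine gap at the first step of your necessity direction, and everything downstream depends on it: you assert that homogeneity of $(M,F)$ already forces $W$ to be a Killing field of $h$, "by the known homogeneity criterion." That is false. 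Homogeneity only gives that $W$ is invariant under the transitive group $I(M,F)=\{h\text{-isometries whose differential preserves }W\}$, hence of constant $h$-length; an invariant vector field on a Riemannian homogeneous space need not be Killing. Concretely, a generic left-invariant metric $h$ on a compact simple Lie group together with a short left-invariant $W$ yields a homogeneous Randers metric whose wind is not Killing. Without $W$ Killing, $\varphi_t$ is not an $h$-isometry and your displacement identity --- the engine of the whole argument --- fails. This is exactly where the paper invests its technical work: it first proves that every sufficiently small CW-translation of a homogeneous Finsler space is generated by a Killing field of constant length, deduces from (restrictive) CW-homogeneity that such fields exhaust all tangent directions, notes that each of them commutes with $W$, and only then concludes that the flow of $W$ preserves $F$, i.e., that $W$ is Killing. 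You need this step, or an equivalent substitute, before your proof can start.

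A secondary point: in the sufficiency direction you correctly isolate the transitivity of the $W$-preserving $h$-CW-translations on each factor as the remaining input, but the odd-sphere case is left as something "one must show." It is true --- the unitary Clifford rotations $e^{i\alpha}P_{+}+e^{-i\alpha}P_{-}$ commuting with the Hopf structure act transitively on $S^{2k-1}$, essentially because the chords joining $e^{i\alpha}$ to $e^{-i\alpha}$ sweep out the closed unit disk as $\alpha$ varies --- and the paper handles it by citing its reference on Randers spheres; as written in your proposal, however, it remains an unproved claim, and it is the one factor where the verification is not formally routine.
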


This result presents many examples of non-reversible CW-homogeneous
Finsler spaces. It follows easily from the main results of Bao-Robles-Shen \cite{BRS04} that, on an odd dimensional sphere, a
CW-homogeneous Randers space must be of constant flag curvature.
However, the converse  statement is not true. In fact, the Randers
space $(S^{2n-1},F)$ of constant flag curvature is CW-homogeneous if
and only if in its navigation data $(h,W)$, $h$ is the standard Riemannian metric and $W$ is a Killing field
of constant length with $||W||_h<1$ (\cite{DM2}). When $W$ is not zero
and of constant length, it defines a complex structure on
$\mathbb{R}^{2n}$, and correspondingly the sphere can be denoted as
$U(2n)/U(2n-1)$. It is easily seen that $W$ is the Killing vector field generating the
center of $U(2n)$ (\cite{WHD}).

%If we write the odd-dimensional sphere as the homogeneous space
%$S^{2m-1}=\mathrm{SU}(m)/\mathrm{SU}(m-1)$ ($m\geq 2$). Then there
%is a family of $\mathrm{SU}(m)$-invariant vector fields $\lambda W$,
%$\lambda\in \mathbb{R}$, where $W$ is $\mathrm{SU}(m)$-invariant
%with unit length with respect to the standard Riemannian metric $Q$
%(of constant curvature $1$) on $S^{2m-1}$. It can be proven (see,
%e.g., \cite{WHD}) that $W$ is a Killing vector field of $Q$.
%Therefore on the odd-dimensional sphere the Randers metric with the
%navigation data $(Q, \lambda W)$, $|\lambda|<1$, is CW-homogeneous,
% and this metric is reversible if and only if $\lambda=0$.

In view of the above result, it would be an interesting
 problem to classify CW-homo-geneous Finsler spaces in general.

In Section 2, we recall the preliminary knowledge about Finsler
geometry, CW-translation and CW-homogeneity. In Section 3, we
prove an interrelation theorem between CW-translations and Killing
vector fields of constant length on a homogeneous Finsler space. In Section 4, we prove the main
theorem.

\section{Preliminaries}

Let $M$ be a connected manifold. A Finsler metric is a continuous
function $F:TM\rightarrow \mathbb{R}^+$, which is smooth on the slit
tangent bundle $TM\backslash 0$. In a standard local coordinates
$(x^i,y^j)$ for $TM$, where $x=(x^i)$ is the local coordinates for
$M$, and $y=y^j\partial_{x^j}$ is the linear coordinates for $y\in
T_x(M)$, the Finsler metric $F$ is required to satisfy the following
properties:
\begin{description}
\item{(1)}\quad $F(x,y)>0$ for any $y\neq 0$.
\item{(2)}\quad $F(x,\lambda y)=\lambda F(x,y)$ for any $y\in T_x(M)$ and $\lambda >0$.
\item{(3)}\quad The Hessian matrix defined by $g_{ij}=\frac{1}{2}[F^2]_{y^i y^j}$ is positive definite.
\end{description}

A Randers metric on $M$ is a Finsler
   metrics of the form $F=\alpha+\beta$, where $\alpha$ is a Riemannian
   metric and $\beta$ is a smooth $1$-form on $M$ whose length with respect to $\alpha$
   is everywhere less than $1$.
   This kind of metric was introduced by G. Randers in 1941 (\cite{RA41})
   in the context of general relativity.

The above expression of a Randers metric is called the defining form in the literature.
%Let $(U;x^1, x^2,\cdots, x^n)$ be a local coordinate system. Then on the slit tangent
%bundle $TM_o$ we have a standard coordinate system on the open subset $TU_o$, denoted as
%$$(TU_o, x^1, x^2,\cdots, x^n, y^1, y^2,\cdots, y^n).$$
%A tangent vector $y\in T_xM$, $x\in U$ can be expressed  as
%$$y=\sum_{i=1}^ny^j\frac{\partial}{\partial x^j}.$$
%Suppose $F=\alpha+\beta$ is a Randers metric on $M$. Then on the above  local standard coordinate system $F$ can be expressed as
Using standard local coordinates $(x^i,y^j)$ for $TM$, a Randers
metric can be presented as
\begin{align}
 F=\alpha+\beta=\sqrt{a_{ij}(x)y^iy^j}+b_i(x)y^i.
 \label{def}
 \end{align}

There is another method introduced by Shen \cite{SH02} to express a
Randers metric. The main idea is that the Randers metric $F$ can
also be uniquely written as
$$F(x,y)=\frac{\sqrt{h(y, W)^2+\lambda h(y, y)}}{\lambda}-\frac{h(y,
W)}{\lambda},$$ where $h$ is a Riemannian metric,  $W$ is a
vector field on $M$ with $h(W,W)<1$ and  $\lambda=1-h(W, W)$.  The
pair $(h, W)$ is called the navigation data of the Randers metric
$F$. The navigation data  is convenient when handling some problems concerning the
flag curvatures and Ricci scalar of a Randers space. For example,
using navigation data, Bao, Robles and Shen presented a very explicit description of Einstein-Randers metrics and Randers spaces
of constant flag curvature; see \cite{BR04,
BRS04}.

In a local coordinate system, the transformation laws between the defining form and
navigation data can be described as the following. If
$$F=\alpha+\beta=\sqrt{a_{ij}y^iy^j}+b_iy^i,$$
then the navigation data has the form
$$h_{ij}= (1-||\beta||^2)(a_{ij}-b_ib_j),\quad
W^i=-\frac{a^{ij}b_j}{1-||\beta||^2_{\alpha}}.$$
Conversely, the defining form can also be expressed by the navigation data through  the formula:
$$ a_{ij}=\frac{h_{ij}}{\lambda}+\frac{W_i}{\lambda}\frac{W_j}{\lambda},\quad b_i=\frac{-W_i}{\lambda},\eqno{(1.2)}$$
here $W_i=h_{ij}W^j$ and $\lambda=1-W^iW_i=1-h(W,W)$. All these formulas can be found in \cite{BR04}.

A Finsler metric $F$ defines the length of a tangent vector, and the
integration along a piece-wise smooth path defines the arc length of curves.
Taking the infimum of arc lengths for all
 piece-wise smooth paths, one defines a distance
 function $d(\cdot,\cdot)$ on $(M, F)$, which  satisfies all the conditions of a distance in the general sense except  the reversibility.
Based on  the distance function, we have generalized the
concept of a Clifford-Wolf translation (CW-translation) to Finsler
geometry.

\begin{definition}
Let $(M, F)$ be a connected Finsler space. An isometry $\rho$ of $(M, F)$ is called a Clifford-Wolf translation   if the function $d(x, \rho(x))$  is a constant on $M$.
\end{definition}

One can also define the notions of  Clifford-Wolf homogeneous (CW-homogenous)
spaces and of restrictively Clifford-Wolf homogenous (restrictively
CW-homogeneous) spaces.

\begin{definition}
A Finsler manifold $(M,F)$ is called Clifford-Wolf homogeneous  if
for any two points $x,x'\in M$, there is a CW-translation $\rho$
such that $\rho(x)=x'$. It is called restrictively Clifford-Wolf
homogeneous if  for any point $x\in M$ there is a neighborhood $V$ of
$x$, such that for any two points $x_1, x_2\in V$
there is
 a CW-translation $\rho$ such that $\rho (x_1)=x_2$.
\end{definition}

From the above definitions, it is obvious that a Finsler manifold
$(M,F)$ is restrictively CW-homogeneous if it is CW-homogeneous. Moreover, a connected restrictively CW-homogeneous Finsler space
$(M, F)$  must be a homogeneous Finsler space, i.e.,  its isometry group
$G=I(M,F)$ acts transitively on $M$.  In fact, if it is connected and
restrictively CW-homogeneous, then each $G$-orbit in $M$ is an open  as well as a closed subset.

\section{Killing vector fields of constant length}

 CW-translations of a connected Finsler manifold $(M,F)$ have a
natural interrelation with the Killing vector fields of constant
length, which has been proved by Berestovskii and Nikonorov
(\cite{BN09}) in the Riemannian case, and by the authors in Finsler geometry
(\cite{DM1}).
\begin{theorem}(\cite{DM1})\label{relation-1}
Let $(M,F)$ be a complete Finsler manifold with a positive
injectivity radius. If $X$ is a Killing vector field of constant
length, then the flow $\varphi_t$ generated by $X$ is a
CW-translation for all sufficiently small $t>0$.
\end{theorem}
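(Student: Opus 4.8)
The plan is to prove that for all sufficiently small $t>0$ the displacement function $f_t(x):=d(x,\varphi_t(x))$ is not merely constant in $x$ but in fact equals $c\,t$, where $c:=F(X)$ denotes the (constant) Finslerian length of $X$; once this identity is established the Clifford--Wolf property is immediate. Since $X$ is Killing, its flow $\varphi_t$ consists of isometries of $(M,F)$, so each $\varphi_t$ is automatically a candidate CW-translation and I only need to control $f_t$. The easy half is the upper bound: the orbit $s\mapsto\varphi_s(x)$, $s\in[0,t]$, is a smooth curve whose velocity is $X_{\varphi_s(x)}$, of constant $F$-length $c$, so integrating gives that this curve has length $c\,t$, whence $f_t(x)\le c\,t$ for every $x$ and every $t>0$.

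The heart of the argument is to show that the orbits of $X$ are geodesics of $(M,F)$, and this is the step I expect to be the main obstacle. In the Riemannian case it is the classical identity $\nabla_XX=-\tfrac12\nabla\|X\|^2$, whose right-hand side vanishes precisely because $\|X\|$ is constant; I would establish a Finslerian analogue from the first variation of arc length. Writing $T=\dot\gamma$ for the velocity of the orbit $\gamma(s)=\varphi_s(x)$ (of constant speed $c$) and $U$ for an arbitrary variation field with fixed endpoints, the first variation is a positive multiple of $-\int_0^t g_T(D_TT,U)\,ds$, so $\gamma$ is a geodesic precisely when $D_TT\equiv0$. The task is then to combine the Killing equation for $X$ (the infinitesimal statement that each $\varphi_s$ preserves the fundamental tensor, expressed through the Chern connection and the Cartan tensor) with the hypothesis $F(X)\equiv c$ to force $D_XX=0$ along the orbit; the constancy of $F(X)$ is exactly what kills the gradient-type term that survives in the general Killing computation. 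Alternatively, one may exploit that each $\varphi_s$ is an isometry permuting the orbits and use a symmetry argument to identify the orbit through $x$ with the geodesic issuing from $x$ in the direction $X_x$.

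Granting that the orbits are geodesics, I would then invoke completeness together with the positive injectivity radius. Let $r_0>0$ be a uniform lower bound for the injectivity radius. Then any geodesic of length less than $r_0$ is the unique minimizing geodesic between its endpoints, and its length equals the distance between them. Applying this to the orbit geodesic from $x$ to $\varphi_t(x)$, which has length $c\,t$, I conclude that for every $t$ with $c\,t<r_0$, i.e.\ $0<t<r_0/c$, one has $f_t(x)=c\,t$ for all $x\in M$. Hence $d(x,\varphi_t(x))\equiv c\,t$ is independent of $x$, and $\varphi_t$ is a CW-translation for all sufficiently small $t>0$, as claimed. The uniformity supplied by the positive injectivity radius is essential: without it the range of admissible $t$ could shrink to zero from point to point, and one could not guarantee a single interval $(0,r_0/c)$ on which the conclusion holds simultaneously at every $x$.
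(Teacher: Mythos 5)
Your proposal is correct and follows essentially the same route as the source the paper cites for this theorem (and reuses in the proof of Theorem \ref{new-interrelation}): the flow consists of isometries, the orbit from $x$ to $\varphi_t(x)$ has length $F(X)\,t$, the orbits are geodesics because the Finslerian identity $\nabla^X_X X=-\tfrac{1}{2}\tilde{\nabla}^{(X)}F(X)^2$ vanishes when $F(X)$ is constant, and the uniform positive injectivity radius makes these short geodesics minimizing, giving $d(x,\varphi_t(x))\equiv F(X)\,t$. The one step you leave as an anticipated ``obstacle''---that orbits of a constant-length Killing field are geodesics---is exactly this identity (Corollary 3.2 of \cite{DM1}), so your outline closes as intended; just note separately the trivial case $F(X)\equiv 0$, where $X=0$ and $\varphi_t$ is the identity.
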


\begin{theorem}(\cite{DM1})\label{original-interrelation}
Let $(M, F)$ be a  compact Finsler manifold. Then there is a
$\delta>0$, such that any CW-translation $\rho$ with
$d(x,\rho(x))<\delta$ is generated by a Killing vector field of
constant length.
\end{theorem}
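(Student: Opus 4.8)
The plan is to realize the isometry group as a compact Lie group, reduce a CW-translation of small displacement to the time-one map of a one-parameter subgroup, and then show that the generating Killing field has constant length.

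First I would use the fact that the isometry group $G=I(M,F)$ of a Finsler space is a Lie group, and that compactness of $M$ forces $G$ to be compact; its Lie algebra $\mathfrak g$ is identified with the space of Killing vector fields, and each such field $X$ generates a flow $\varphi_t$, with $\exp$ denoting the Lie-group exponential so that $\exp X$ is the time-one map $\varphi_1$. The key reduction is that small displacement forces $\rho$ to lie near the identity: the function $g\mapsto\max_{x\in M}d(x,g(x))$ is continuous on $G$ and vanishes exactly at $e$, so on the complement of any neighbourhood $U$ of $e$ it attains a positive minimum by compactness. Choosing $\delta$ below this minimum guarantees $\rho\in U$; taking $U=\exp(B)$ for a ball $B\subset\mathfrak g$ on which $\exp$ is a diffeomorphism, every CW-translation $\rho$ with $d(x,\rho(x))<\delta$ is $\rho=\exp X=\varphi_1$, the time-one map of the flow of a Killing field $X$.

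Next I would record the first-variation estimate. For fixed $x$ and small $t>0$, $\varphi_t(x)$ lies within the injectivity radius of $x$ (positive since $M$ is compact), and the minimal geodesic from $x$ to $\varphi_t(x)$ has length $d(x,\varphi_t(x))=tF_x(X(x))+o(t)$ by positive homogeneity of $F$; hence $\lim_{t\to0^+}d(x,\varphi_t(x))/t=F_x(X(x))$, with the convergence uniform in $x$ by smoothness and compactness. Writing $\Phi(x):=F_x(X(x))$ for the pointwise length of the Killing field, the problem is reduced to showing $\Phi$ is constant, which is exactly the statement that $\rho=\varphi_1$ is generated by a Killing field of constant length.

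The crux—and the step I expect to be the main obstacle—is to promote the CW-property from $t=1$ to the whole subgroup, i.e. to prove that each $\varphi_t$ with $0<t\le1$ is itself a CW-translation. Granting this, if $d(x,\varphi_t(x))=c(t)$ is independent of $x$ for every such $t$, then the pointwise first-variation limit forces $\Phi(x)=\lim_{t\to0^+}c(t)/t$ to be independent of $x$, so $X$ has constant length and $\rho=\varphi_1$ is generated by it. The easy half is an inequality: subadditivity of $\Psi(t):=\max_x d(x,\varphi_t(x))$ (from the triangle inequality together with the fact that each $\varphi_s$ is an isometry) combined with $\Psi(1)=r$ gives $\Psi(1/n)\ge r/n$ and hence $\max_x\Phi\ge r$. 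Closing this to $\Phi\equiv r$ is the hard part: I would adapt the classical square-root construction for Clifford translations, showing that, since $r=d(x,\rho(x))$ is constant and less than the injectivity radius, the unique minimal geodesic $\gamma_x$ from $x$ to $\rho(x)$ depends smoothly on $x$ and the intermediate maps $x\mapsto\gamma_x(tr)$ are isometries that coincide with $\varphi_t$ and move every point the same distance. Here the constant-displacement hypothesis must be used in full strength, and the non-reversibility of $F$ requires that one control the reverse distance $d(\varphi_t(x),x)$ alongside $d(x,\varphi_t(x))$; this is the technical heart of the argument.
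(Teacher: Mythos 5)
Your reduction steps are sound and agree with how the paper handles the analogous statement: the paper does not actually reprove Theorem \ref{original-interrelation} (it is quoted from \cite{DM1}), but its proof of the companion result, Theorem \ref{new-interrelation}, begins exactly as you do --- writing $\rho=\exp X=\varphi_{1;X}$ for a unique Killing field $X$ in a small ball $B\subset\mathfrak{g}$ on which $\exp$ is a diffeomorphism, with $B$ shrunk so that $F(X)<r$, the injectivity radius. Your first-variation identity $\lim_{t\to 0^{+}}d(x,\varphi_{t}(x))/t=F(X(x))$ is also correct. The problem is the step you yourself flag as the crux. Your route to the constancy of $F(X)$ is to show that every $\varphi_{t}$, $0<t\le 1$, is a CW-translation by producing ``roots'' of $\rho$ as the intermediate maps $x\mapsto\gamma_{x}(tr)$ along the minimal geodesics $\gamma_{x}$ from $x$ to $\rho(x)$, and then identifying these maps with $\varphi_{t}$. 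That identification is circular: the orbit $t\mapsto\varphi_{t;X}(x)$ coincides with the minimal geodesic from $x$ to $\rho(x)$ only if the integral curves of $X$ are geodesics, and for a Killing field this is governed by $\nabla^{X}_{X}X=-\frac{1}{2}\tilde{\nabla}^{(X)}F(X)^{2}$, i.e.\ it is essentially equivalent to the constancy of $F(X)$ that you are trying to prove. In addition, the claim that the intermediate maps are isometries is not justified; this is false for a general isometry near the identity and requires a genuine argument even for a CW-translation.

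The paper's mechanism avoids this loop entirely and is worth internalizing. By compactness, $F(X)$ attains its supremum at some point $x$ (in Theorem \ref{new-interrelation} this is arranged by conjugating $\rho$ by a sequence $g_{n}$ and passing to a limit inside the compact set $\exp^{-1}(\mathcal{C}_{\delta})\cap B$; in the compact case the maximum is attained directly). Since $F(X)$ is constant along integral curves of $X$, the whole orbit through $x$ consists of maximum points, so the gradient term vanishes there and $\nabla^{X}_{X}X=0$ along that orbit; hence this one flow curve is a geodesic, of length $F(X(x))\le r$ over $t\in[0,1]$, hence minimizing, which yields $\sup F(X)=F(X(x))=d(x,\rho(x))$. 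The same argument at a minimum point gives $\inf F(X)=d(x,\rho(x))$, and constancy follows. If you replace your square-root step with this extremum argument, your outline closes; as written, the central step is missing.
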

%If $(M,F)$ has a positive injectivity radius, any Killing vector
%field of constant length will generate a local one-parameter
%semi-subgroup $\varphi_t$ of CW-translations , with the positive
%parameter $t$ close to 0. If $M$ is compact, any CW-translation
%close enough to the identity map belongs to such a local
%one-parameter semi-subgroup generated by a Killing vector field of
%constant length.

 Theorem
\ref{original-interrelation} is not convenient when dealing with non-compact Finsler spaces.  We now prove the following theorem which will be useful in proving the main theorem of this paper.

\begin{theorem}\label{new-interrelation}
Let $(M,F)$ be a connected homogeneous Finsler space. Then there is
a $\delta>0$, such that any CW-translation $\rho$ with
$d(x,\rho(x))\leq\delta$ is generated by a Killing vector field of
constant length.
\end{theorem}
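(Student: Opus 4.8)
The plan is to reduce the statement to a neighborhood of the identity in the full isometry group $G=I(M,F)$, where the obstruction to applying Theorem~\ref{original-interrelation} directly is that $M$ need not be compact. The ingredient that replaces compactness of $M$ is the properness of the isometric $G$-action. Fix a base point $x_0$. Since $(M,F)$ is homogeneous it is complete, so by the Finsler Hopf--Rinow theorem every closed metric ball $\bar B(x_0,r)$ is compact; moreover $G$ is a Lie group acting properly on $M$ (the Finsler analogue of the Myers--Steenrod theorem). I would first establish the following localization lemma: for every neighborhood $U$ of $e$ in $G$ there is a $\delta>0$ such that every CW-translation $\rho$ with $d(x,\rho(x))\le\delta$ lies in $U$. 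If this failed, one would obtain CW-translations $\rho_n$ with constant displacement $c_n\to 0$ staying outside a fixed neighborhood $U_0$ of $e$. Since $\rho_n(x_0)\in\bar B(x_0,1)$ and the action is proper, the $\rho_n$ lie in a compact subset of $G$ and subconverge to some $\rho_\infty\in G$. Continuity of the action and of $d$ then gives $d(x,\rho_\infty(x))=\lim_k c_{n_k}=0$ for every $x$, so $\rho_\infty=e$, contradicting $\rho_n\notin U_0$.

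With the lemma in hand, I would shrink $U$ so that the Lie exponential $\exp:\mathfrak g\to G$ restricts to a diffeomorphism from a neighborhood of $0$ onto $U$. Then any CW-translation $\rho$ with $d(x,\rho(x))\le\delta$ can be written uniquely as $\rho=\exp X=\varphi_1^X$, the time-one map of the flow $\varphi_t^X$ of the Killing vector field $X$ determined by the corresponding element of $\mathfrak g$; thus $\rho$ is automatically \emph{generated} by a Killing field, and it remains only to show that $X$ has constant length. Here I would observe that $F(X)$ is constant along each integral curve of $X$, because $\varphi_t^X$ is an isometry carrying $X$ to $X$. Consequently the flow segment from $x$ to $\varphi_1^X(x)$ has $F$-length exactly $F(X_x)$, and comparing it with a minimizing geodesic yields the lower bound $F(X_x)\ge d(x,\rho(x))=c$ for every $x$.

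The heart of the proof, and the step I expect to be hardest, is to upgrade this inequality to the equality $F(X)\equiv c$. My approach is to invoke the Finsler counterpart of the identity that controls the geodesic curvature of an integral curve of a Killing field by the differential of $F(X)^2$, so that such an integral curve is a geodesic exactly at the critical points of $F(X)$. At a point $x_*$ where $F(X)$ attains its maximum the flow segment from $x_*$ to $\varphi_1^X(x_*)$ is then a geodesic, and since $X$ is small it is minimizing, whence $\max F(X)=F(X_{x_*})=d(x_*,\rho(x_*))=c$; combined with $F(X_x)\ge c$ everywhere this forces $F(X)\equiv c$, so $X$ is a Killing field of constant length generating $\rho$. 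The genuine obstacle is that, $M$ being non-compact, the maximum of $F(X)$ need not be attained, so this extremal argument is not immediately valid. I would overcome it using homogeneity: since $F(X)$ is constant along the orbits of $\{\varphi_t^X\}$ and these orbits (or their closures) are well controlled, one can either pass to a space on which the relevant extremum is realized, or control the oscillation of $F(X)$ directly as the displacement tends to $0$ and take a limit. Making this attainment/limit analysis precise in the Finsler category, together with a careful verification of the connection identity above, is the main difficulty; by contrast, the compactness reduction of the first step is exactly what renders the non-compact homogeneous case accessible, since it is what brings us into the regime $\rho=\exp X$.
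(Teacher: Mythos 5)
Your framework agrees with the paper's up to the decisive step: you localize small-displacement CW-translations into $\exp(B)$ for a small ball $B\subset\mathfrak g$ (the paper asserts this compactness/localization more tersely, but your properness argument is a reasonable way to justify it), you get the lower bound $F(X_x)\ge d(x,\rho(x))$ from the length of the flow segment, and you correctly identify that at a point where $F(X)$ attains its maximum the integral curve is a geodesic, short enough to be minimizing, which would force $\max F(X)=d(x,\rho(x))$ and hence constancy. (Your observation that the pointwise lower bound makes the separate infimum argument unnecessary is a mild streamlining of the paper's ending.) But you then explicitly leave open the one step that makes the non-compact case nontrivial --- the attainment of $\sup F(X)$ --- and the two remedies you sketch (``pass to a space on which the extremum is realized'' or ``control the oscillation as the displacement tends to $0$'') are not arguments; the second in particular cannot work as stated, since the theorem concerns a fixed $\rho$ with fixed displacement.

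The paper closes this gap with a specific conjugation-and-compactness device that your proposal is missing. Pick $g_n\in G$ with $F(X(g_n^{-1}x))\to\sup F(X)$ and conjugate: $\rho_n=\mathrm{Ad}(g_n)\rho$ is again a CW-translation with the same displacement function, generated by $X_n=(g_n)_*X$, and $F(X_n(x))=F(X(g_n^{-1}x))$. All the $X_n$ lie in the compact set $\exp^{-1}(\mathcal C_\delta)\cap B$, so a subsequence converges to a Killing field $X'$ generating a CW-translation $\rho'$ with $d(\cdot,\rho'(\cdot))\equiv d(\cdot,\rho(\cdot))$, and now $F(X'(\cdot))$ attains the value $\sup F(X)$ at the \emph{fixed} base point $x$. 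Your extremal/geodesic argument is then applied to $X'$ rather than to $X$, giving $\sup F(X)=d(x,\rho'(x))=d(x,\rho(x))$, which combined with your lower bound yields constancy. In short: homogeneity is used not merely to guarantee completeness and a positive injectivity radius, but to translate the near-maximal points of $F(X)$ back to a single base point while staying inside a compact family of Killing fields; without this (or an equivalent substitute) your proof does not go through.
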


\begin{proof}
The connected isometry group $G=I_0(M,F)$ is Lie group (\cite{DH02})
which acts transitively on $M$. Let $\mathfrak{g}$ be its Lie
algebra. The exponential map $\exp:\mathfrak{g}\rightarrow G$ is a
diffeomorphism when restricted to a small round open disk
neighborhood $B$ of $0\in \mathfrak{g}$. When the positive number $\delta$
is small enough, all the CW-translations $\rho$ with
$d(x,\rho(x))\leq\delta$ is contained in $\exp(B)$. If we denote the
set of all CW-translations $\rho$ with $d(x,\rho(x))\leq \delta$ as
$\mathcal{C}_\delta$, then both $\mathcal{C}_\delta$ and
$\exp^{-1}(\mathcal{C}_\delta)\cap B$ are compact. As the
homogeneous space $(M,F)$ has a positive injectivity radius $r>0$,
we can choose $B$ to be small enough such that for any $X\in B$,
$F(X(x))<r$.

Now any CW-translation $\rho\in\mathcal{C}_\delta$ is generated
by a unique Killing vector field $X\in B$, such that
$\rho=\varphi_{1;X}=\exp X$, where $\varphi_{t;X}$ is the local one-parameter subgroup of
diffeomorphisms generated by $X$. We need to prove that $F(X)$ is a
constant function on $M$. Let $g_n$ be a sequence in $G$ such that
$F(X(g_n^{-1}x))$ converge to $\sup F(X)$. The supremum of $F(X)$ is
not required to be finite, but we can see from the later argument that
$\sup F(X)\leq r$. We have a sequence of CW-translations
$\rho_n=\mathrm{Ad}(g_n)\rho$ in $\mathcal{C}_{\delta}$. Each
$\rho_n$ is generated by the Killing vector field
$X_n={g_{n}}_{*}X$, i.e., $\rho_n=\varphi_{1;X_n}=\exp X_n$, and we have
$F(X(g_n^{-1}x))=F(X_n(x))$. By the connectedness of $G$ one easily sees that all the Killing vector fields $X_n$ is
contained $\exp^{-1}(\mathcal{C}_\delta)\cap B$. Choosing subsequence if necessary, one can assume
that the sequence $\{\rho_n\}$ converges to $\rho'\in\mathcal{C}_\delta$, with
$d(\cdot,\rho'(\cdot))\equiv d(\cdot,\rho(\cdot))$, that $\{X_n\}$ converges
to a Killing vector field $X'\in\exp^{-1}\mathcal{C}_\delta\cap B$
and that $\rho'=\varphi_{1;X'}=\exp X'$. At each point, we have
$$X'(\cdot)=\lim_{n\rightarrow\infty}X_n(\cdot).$$
 In particular, at
the point $x$, the function
$$F(X'(\cdot))=\lim_{n\rightarrow\infty}F(X_n(\cdot))$$
reaches its maximum. If $F(X'(x))=0$, then
$X=0$ and $\rho$ is the identity map. Otherwise $\nabla^X_X
X=-\frac{1}{2}\tilde{\nabla}^{(X)}F(X)^2$ vanishes along the flow
curve $\varphi_{t;X'}(x)$, $t\geq 0$. Then by Corollary 3.2 of \cite{DM1},  the flow curve
$\varphi_{t;X'}(x)$, $t\geq 0$ is a geodesic. Because
$F(X'(\varphi_{t;X'}(x)))=F(X'(x))\leq r$ for all $t\geq 0$, the
geodesic $\varphi_{t;X'}(x)$, $t\in [0,1]$ is minimizing. Thus
$$F(X'(x))=\sup(F(X))=d(x,\rho'(x))=d(x,\rho(x)).$$
A similar argument
for $\inf(F(X))$ (which must be positive when the CW-translation is
not the identity map) gives $\inf(F(X))=d(x,\rho(x))$. Therefore $X$ is a
Killing vector field of constant length.
\end{proof}

As a corollary, we can give a description of restrictive
CW-homogeneity by Killing vector fields of constant length, which
will be used in the proof of the main theorem.

\begin{theorem}\label{2}
Let $(M,F)$ be a connected homogeneous Finsler space. Then it is
restrictively CW-homogeneous if and only if the Killing fields of
constant length can exhaust all tangent directions, or equivalently,
any geodesic starting from any point is the flow curve of a Killing
field of constant length.
\end{theorem}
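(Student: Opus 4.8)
The plan is to prove the two asserted equivalences separately, the substantive one being between restrictive CW-homogeneity and the property that constant length Killing fields exhaust all tangent directions; the second, geodesic reformulation will then be almost formal. The three tools I would rely on are Theorem \ref{relation-1} (the flow of a constant length Killing field is a CW-translation for small time), Theorem \ref{new-interrelation} (on a homogeneous space a CW-translation of small displacement is generated by a constant length Killing field), and the fact, already invoked in the proof of Theorem \ref{new-interrelation} via Corollary 3.2 of \cite{DM1}, that every flow curve of a constant length Killing field is a geodesic. In addition I would use that a homogeneous Finsler space has a positive injectivity radius $r>0$, so that sufficiently short geodesics are the unique minimizers between their endpoints.

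For the direction \emph{restrictively CW-homogeneous} $\Rightarrow$ \emph{directions exhausted}, I would fix a point $x$ and a direction $0\neq v\in T_xM$, and let $\gamma$ be the unit-speed geodesic with $\gamma'(0)=v$. For small $t>0$ the point $x_t=\gamma(t)$ lies in the neighborhood $V$ furnished by restrictive CW-homogeneity, so there is a CW-translation $\rho_t$ with $\rho_t(x)=x_t$; since $\gamma$ is minimizing for small $t$, its displacement is $d(x,\rho_t(x))=t$. Choosing $t\leq\delta$, Theorem \ref{new-interrelation} produces a constant length Killing field $X_t$ with $\rho_t=\varphi_{1;X_t}$ and $F(X_t)=t$. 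The flow curve $s\mapsto\varphi_{s;X_t}(x)$ is then a geodesic joining $x$ to $x_t$ of length $F(X_t)=t=d(x,x_t)$, hence minimizing; by uniqueness within the injectivity radius it coincides with $\gamma|_{[0,t]}$ up to reparametrization. Consequently $X_t(x)$ is a positive multiple of $v$, so the direction $v$ is attained by a constant length Killing field.

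For the converse, assume the constant length Killing fields exhaust all tangent directions. Given $x$ and a nearby target, I would join them by a minimizing geodesic $\gamma$ with $\gamma'(0)=v$ and choose a constant length Killing field $X$ with $X(x)$ pointing along $v$. Its flow curve through $x$ is a geodesic with the same initial direction, hence equals $\gamma$ up to reparametrization, so some time-$t_0$ map $\varphi_{t_0;X}$ carries $x$ to the target; by Theorem \ref{relation-1} this map is a CW-translation once $t_0$ is small. The point requiring care is the \emph{uniformity} of the neighborhood: one must produce a single $V$ around $x$ that works for all pairs in $V$ simultaneously. I would obtain this from a compactness argument on the unit tangent sphere at $x$, which is compact, bounding the time $t_0$ needed to reach points within a fixed radius and keeping it below the threshold of Theorem \ref{relation-1}; homogeneity then transports the conclusion to every point. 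This uniformity is the main obstacle in the whole argument.

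Finally, the stated reformulation in terms of geodesics is immediate from the same geodesic property: if the directions are exhausted, then given any geodesic from any point one selects a constant length Killing field in its initial direction, whose flow curve is a geodesic with the same initial data and therefore coincides with the given geodesic; conversely, if every geodesic is such a flow curve then in particular every initial direction is realized. Thus all three conditions are equivalent.
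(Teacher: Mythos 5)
Your proposal is correct and follows essentially the same route as the paper's (much terser) proof: Theorem \ref{new-interrelation} for the direction from restrictive CW-homogeneity to exhaustion of tangent directions, and Theorem \ref{relation-1} together with the fact that flow curves of constant-length Killing fields are geodesics for the converse. The uniformity issue you flag is real but resolves immediately once the Killing field is normalized to unit length, since the threshold in Theorem \ref{relation-1} then depends only on the injectivity radius; your write-up simply supplies details the paper leaves implicit.
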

\begin{proof} If $(M,F)$ is homogeneous, it has a
positive injectivity radius $r>0$. At each point, there is a small
neighborhood, in which for any points $x$ and $x'$, $d(x,x')<r$.
Then the proof of Theorem \ref{relation-1} indicates the existence
of a CW-translation $\rho$ which maps $x$ to $x'$. So $(M,F)$ is
restrictive CW-homogeneous.

If $(M,F)$ is connected and restrictively homogeneous, then it is
homogenous. Then Theorem \ref{new-interrelation} indicates that all tangent
vectors are exhausted by Killing vector fields of constant length.
\end{proof}

In the Riemannian case, the above arguments give an alternative
proof for Theorem 7 in \cite{BN09}.

\section{Proof of Theorem \ref{main}}

We first prove that, if $(M,F)$ is a connected simply connected
restrictively CW-homogeneous Randers space, then $M$ has a product
form with a metric $F$ as described in the theorem.

Let $(h, W)$ be the navigation data of $F$ and $\varphi_{t;W}$ be
the flow generated by the vector field $W$. By Theorem \ref{2}, for
any $y\in TM_x$, there is a Killing vector field $X$ of constant
length for $F$, such that $X(x)=y$. Let $x'\in M$ and $t\geq 0$,
such that $\varphi_{t;W}(x)=x'$. Since $X$ is a Killing vector field
for $F$, we have $L_X W=[X,W]=0$. Moreover, $y=X(x)$ has the same
$F$-length as $y'=X(x')={\varphi_{t;W}}_*(y)$. It follows that all
$\varphi_{t;W}$'s are isometries and
 $W$ is a Killing vector field of $F$. For any Killing vector field
 $X$ of constant length $1$ of $F$, $X+W$ is a Killing vector field
 for $h$. It is easily seen that   $X+W$ has
 constant length $1$ with respect to $h$. Since $(M,F)$ is restrictively
 CW-homogeneous, the set of Killing vector fields $X$'s of constant length $1$ of $F$
exhaust all tangent directions. Because the length of $W$ with
respect to $F$ is less than $1$, the set of all the vector fields
$X+W$ exhaust all tangent directions too. So $(M,h)$ is
restrictively CW-homogeneous. By the classification theorem in
\cite{BN09}, $(M,h)$ is a Riemannian product of odd dimensional
spheres with constant curvature metrics, compact connected simply
connected Lie groups with bi-invariant metrics, and a flat Euclidian
space. The
 vector field $W$ is of constant length with respect to $F$, so
it is of  constant length with respect to $h$.

Next we  prove that, if $M$ is the product manifold with a Randers
metric $F$ as described in the theorem, then $(M, F)$ is
restrictively CW-homogeneous.

If $(M,F)$ is the product manifold as described in the theorem, then
it is homogeneous. Let $h^2=\sum_{i=1}^n h_i^2$ be the decomposition
for the symmetric metric, with respect to the decomposition
$M=M_1\times\cdots\times M_n$, and denote $W=\sum_{i=1}^n W_i$. Then
each $W_i$ is a Killing field of constant length for $h_i$. A
diffeomorphism $\psi=\psi_1\times\cdots\times\psi_n\in I(M,h)$ in
which each $\psi_i\in I(M_i,h_i)$ is an isometry for $(M,F)$ if and
only if $\psi_i$ keeps $W_i$ invariant for each $i$. We only need to
check that it acts transitively for each factor. If $W_i=0$, then the
argument for the corresponding factor is trivial. Thus we
assume $W_i\neq 0$. If $(M_i,h_i)$ is a compact connected simply
connected simple Lie group and $h_i$ is bi-invariant, then $W_i$
must belong to the Lie algebra for $L(M_i)$ or $R(M_i)$. Without
losing generality, we assume $W_i$ belongs to the Lie algebra for
$L(M_i)$, then all right translations acts transitively on $M_i$. If
$(M_i,h_i)$ is an odd dimensional sphere $S^{2k-1}$ with constant
curvature metric, then $W_i$ defines a complex structure $J$ such
that $W_i=cJ$ with $c\neq 0$, when both are regarded as matrices in
${\mathfrak so}(2k)$. Then the isometries keeping $W_i$ invariant are just the
group ${\mathrm U}(k)$ with respect to the complex structure $J$, and it is  obvious that
${\mathrm U}(k)$ act transitively on $M_i$. If $(M_i,h_i)$ is a flat euclidian space,
then $W_i$ is a constant vector field, and $\psi_i$ can be any parallel
translation, which also acts transitively on $M_i$.

We now prove that the set of  Killing fields of constant length for $h$
which commute with $W$ exhaust all tangent directions at each point. Any
Killing fields $X$ of constant length for $h$ can be decomposed as
 $X=\sum_{i=1}^n X_i$, in which each $X_i$ is a Killing
field of constant length for $h_i$. The condition $[X,W]=0$ is
equivalent to $[X_i,W_i]=0$ for each $i$. So the discussion breaks
down to a case by case discussion for each factor $(M_i,h_i)$. We
only consider the  case  that $W_i\neq 0$. When $(M_i,h_i)$
is a compact connected simply connected simple Lie group with
bi-invariant metric or a flat euclidian space, the assertion is obvious
because the group of right translations or the parallel translations
are CW-translations for $h_i$, and its Lie algebra provides Killing
fields of constant length for $h_i$ which commutes with $W_i$. If
$(M_i,h_i)$ is an odd dimensional sphere with constant curvature, the assertion
follows immediately from the fact (\cite{DM2}) that the
Randers space with navigation data $(h_i,\lambda W_i)$ is
CW-homogeneous, where $\lambda$ is any constant such that
$||\lambda W_i||_{h_i}<1$.

When $||W||_h<1$, the set of the vector fields $X-W$, where $X$ is a
Killing vector field of constant length $1$ with respect to $h$
commuting with $W$,  exhaust all tangent directions. It is easily seen that any vector field
$X-W$ as above is a Killing vector field of constant length $1$ for $F$. By
Theorem \ref{2}, $(M,F)$ is restrictively CW-homogeneous.

Finally, we prove the CW-homogeneity for a Randers space $(M,F)$ as
described in the theorem. If $(M,F)$ is not CW-homogeneous, then we
can find a Killing vector field of the form $X-W$ of constant length $1$ with
respect to $F$, where $X$ is a Killing vector field of constant
length $1$ for $h$ and $[X,W]=0$. Moreover, there is  a positive constant $t_0$ and two
pairs of points $x_i$ and $x'_i$, $i=0,1$, such that the
diffeomorphisms $\varphi_{X-W;t}$ generated by $X-W$ satisfy the
condition $\varphi_{X-W;t_0}(x_i)=x'_i$, $i=0,1$,  and the geodesic
flow curve of $X-W$ is minimizing from $x_0$ to $x'_0$ but not
minimizing from $x_1$ to $x'_1$. Since $[X,W]=0$,  the
diffeomorphisms $\varphi_{X;t}$ commute with  $\varphi_{W;t'}$. Furthermore,
 we have
\begin{equation}
\varphi_{X-W;t}=\varphi_{X;t}\varphi_{W;-t}=\varphi_{W;-t}\varphi_{X;t}.
\end{equation}
 Since the flow curve from $x_1$ to $x'_1$ is not minimizing,
there is a Killing vector field $X'-W$ of constant length $1$ for
$F$, where $X'$ is a Killing vector field of
constant length $1$ for $h$ with  $[X',W]=0$, and a constant $t'\in [0,t_0)$ such that
\begin{equation}
\varphi_{W;-t_0}\varphi_{X;t_0}(x_1)=\varphi_{W;-t'}\varphi_{X';t'}(x_1),
\end{equation}
i.e., $\varphi_{X;t_0}(x_1)=\varphi_{W;t_0-t'}\varphi_{X',t'}(x_1)$. Since
 $h$ is a symmetric Riemannian metric,  and since $X$ is a Killing field of
constant length $1$ with respect to $h$, the  centralizer of $X$ in the full group of isometries of $h$ acts transitively on
$M$ (see \cite{WO62, OZ74}). Thus there is an
isometry $g$ of $h$, which sends $x_1$ to $x_0$, such that $\mathrm{Ad}(g) X=X$.  Then we have
\begin{equation}
\varphi_{\mathrm{Ad}(g) X;t_0}(x_0)=\varphi_{X;t_0}(x_0)=\varphi_{\mathrm{Ad}(g) W;t_0-t'}\varphi_{\mathrm{Ad}(g)
X';t'}(x_0).
\end{equation}
Since the right side of the above equality gives a path with  arc
length $t'+(t_0-t')|W|_h<t_0$,  the geodesic $\varphi_{X;t}(x_0)$
from $t=0$ to $t=t_0$ is not minimizing with respect to $h$. Now the
next lemma asserts that the above equality still holds with $g$
changed to $e$, for some other $X'$ and $t'$ satisfying the same
properties as above. This implies that the geodesic
$\varphi_{X-W;t}(x_0)$ for $F$ is not minimizing between  $t=0$ to
$t=t_0$, which is a contradiction. Therefore $(M, F)$ must be
CW-homogeneous.

\begin{lemma}
Let $(M,h)$ be connected simply connected Riemannian CW-homogeneous
space, $X$ be a Killing vector field of constant length $1$ for $h$,
and $W$ be a Killing field of constant length less than $1$. Assume
the geodesic $\varphi_{X;t}(x_0)$ of $h$ from $t=0$ to $t=t_0$ is
not minimizing. Then there is a $t'\in [0,t_0)$ and a Killing vector
field $X'$ of constant length $1$ for $h$ with $[X',W]=0$, such that
\begin{equation}
\varphi_{X;t_0}(x_0)=\varphi_{W;t_0-t'}\varphi_{X';t'}(x_0).
\end{equation}
\end{lemma}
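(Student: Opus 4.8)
The plan is to import the Randers metric $F$ with navigation data $(h,W)$ and to transfer the whole question into the geometry of $F$, where the flow curves of the fields $X'-W$, with $X'$ a unit Killing field of $h$ commuting with $W$, are exactly the unit-speed $F$-geodesics issuing from $x_0$. Write $q=\varphi_{X;t_0}(x_0)$ and set $P=\varphi_{W;-t_0}(q)$; let $d_h$ and $d_F$ denote the distance functions of $h$ and of $F$. The first step is a purely formal rewriting: applying $\varphi_{W;-t_0}$ to the asserted equality and using $\varphi_{X'-W;t}=\varphi_{W;-t}\varphi_{X';t}$ (valid because $[X',W]=0$), the equality $\varphi_{X;t_0}(x_0)=\varphi_{W;t_0-t'}\varphi_{X';t'}(x_0)$ becomes equivalent to the single statement $P=\varphi_{X'-W;t'}(x_0)$ for some unit Killing field $X'$ of $h$ with $[X',W]=0$ and some $t'\in[0,t_0)$. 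Thus it suffices to produce an $F$-geodesic of $F$-length $t'<t_0$ from $x_0$ to $P$ whose generating field has the form $X'-W$.

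By the portion of the proof already established, $(M,F)$ is restrictively CW-homogeneous, hence homogeneous and complete, and by Theorem \ref{2} every $F$-geodesic issuing from $x_0$ is the flow curve of an $F$-Killing field of constant length. Since the $F$-isometries are precisely the $h$-isometries preserving $W$, any such field of $F$-length $1$ has the form $Z$ with $X':=Z+W$ a unit Killing field of $h$ satisfying $[X',W]=[Z,W]=0$, i.e. $Z=X'-W$ with $X'\in$ the admissible family. Consequently a minimizing $F$-geodesic from $x_0$ to $P$ — which exists by completeness — automatically supplies the required $X'$, and its length is $t'=d_F(x_0,P)$. In this way the entire lemma reduces to the single inequality $d_F(x_0,P)<t_0$.

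To bound $d_F(x_0,P)$ I would use the Zermelo navigation description of $F$ (as in \cite{BRS04, SH02}). Removing the drift by the substitution $\tilde\gamma(\tau)=\varphi_{W;\tau}(\gamma(\tau))$, and using that the $\varphi_{W;\tau}$ are $h$-isometries, one checks that $F$-unit-speed travel from $x_0$ to a point $b$ in time $T$ corresponds to an $h$-curve of $h$-speed $\le 1$ from $x_0$ to $\varphi_{W;T}(b)$; hence $d_h(x_0,\varphi_{W;T}(b))\le T$ forces $d_F(x_0,b)\le T$. Applying this with $b=P$ and $T=t_0$, and noting the cancellation $\varphi_{W;t_0}(P)=\varphi_{W;t_0}\varphi_{W;-t_0}(q)=q$, the hypothesis of the comparison becomes $d_h(x_0,q)\le t_0$. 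But $q=\varphi_{X;t_0}(x_0)$ and, by assumption, the $h$-geodesic $\varphi_{X;t}(x_0)$ is \emph{not} minimizing on $[0,t_0]$, so in fact $d_h(x_0,q)<t_0$.

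It remains to promote this to a strict estimate for $d_F$. The function $f(T)=d_h(x_0,\varphi_{W;T}(P))-T$ is continuous and $f(t_0)=d_h(x_0,q)-t_0<0$, so $f(T)<0$ on a neighborhood of $t_0$; for such $T<t_0$ the comparison above yields $d_F(x_0,P)\le T<t_0$. Together with the realization of the minimizing $F$-geodesic as a flow curve of some $X'-W$, this gives $t'=d_F(x_0,P)<t_0$ and the desired $X'$, completing the proof. The step I expect to be the main obstacle is the drift-removal identity underlying the navigation comparison: one must verify carefully that $\tilde\gamma(\tau)=\varphi_{W;\tau}(\gamma(\tau))$ trades the wind for an $h$-curve of the same pointwise $h$-speed, which is exactly where the hypotheses that $W$ is Killing (so the $\varphi_{W;\tau}$ are $h$-isometries) and $\|W\|_h<1$ are used decisively.
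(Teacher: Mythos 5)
Your proof is correct, and it reaches the conclusion by a genuinely different (though closely parallel) route from the paper's. The paper stays entirely on the Riemannian side: it sets $f(t)=d_h\bigl(x_0,\varphi_{W;t}\varphi_{X;t_0}(x_0)\bigr)$, uses $f(0)=d_h(x_0,q)<t_0$ (from the non-minimizing hypothesis) together with the intermediate value theorem to locate $t'\in[0,t_0)$ with $f(t_0-t')=t'$, and then realizes the minimizing $h$-geodesic from $x_0$ to $\varphi_{W;t_0-t'}(q)$ as the flow curve of a unit $h$-Killing field commuting with $W$, via the exhaustion of tangent directions by such fields established earlier in the proof of the main theorem. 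You instead transfer everything to the Finsler side: you reduce the identity to $\varphi_{X-W;t_0}(x_0)=\varphi_{X'-W;t'}(x_0)$, note that by the already-established restrictive CW-homogeneity of $(M,F)$ and Theorem \ref{2} a minimizing $F$-geodesic from $x_0$ to $P=\varphi_{X-W;t_0}(x_0)$ is automatically the flow curve of an admissible $X'-W$, and then prove the single quantitative point $d_F(x_0,P)<t_0$ by the Zermelo comparison $d_h(x_0,\varphi_{W;T}(b))\le T\Rightarrow d_F(x_0,b)\le T$ together with continuity at $T=t_0$. The two arguments are dual under the navigation correspondence---your comparison inequality at the critical value $T=t'$ is precisely the paper's fixed-point equation $f(t_0-t')=t'$---but your version makes transparent \emph{why} the lemma is exactly what the main proof needs (it directly produces the shorter $F$-geodesic contradicting minimality), at the price of verifying the drift-removal identity you rightly flag; that identity does hold because the $\varphi_{W;\tau}$ are $h$-isometries fixing $W$. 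One cosmetic caveat: the sign convention for $W$ in the navigation data (whether $F(y)=1$ corresponds to $\|y+W\|_h=1$ or $\|y-W\|_h=1$) is handled loosely in the paper itself, so make sure your substitution $\tilde\gamma(\tau)=\varphi_{W;\tau}(\gamma(\tau))$ and your choice $P=\varphi_{W;-t_0}(q)$ use one consistent convention.
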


\begin{proof}
Let $f(t)$ be the distance from $x_0$ to
$\varphi_{W;t}\varphi_{X;t_0}(x_0)$ with respect to $h$. Obviously
$f(t)$ is continuous and non-negative for all $t\geq 0$ and
$f(0)<t_0$ because the geodesic $\varphi_{X;t}(x_0)$ from $t=0$ to
$t=t_0$ is not minimizing. There exists a $t'\in [0,t_0)$ such that
$f(t_0-t')=t'$. The completeness of $M$ implies there is a
minimizing geodesic from $x_0$ to
$\varphi_{W;t_0-t'}\varphi_{X;t_0}(x_0)$. Our earlier arguments indicate that
this minimizing geodesic is the flow curve of a Killing vector field
$X'$ of constant length 1 with respect to $h$ with $[X',W]=0$, i.e.
\begin{equation}
\varphi_{X;t_0}(x_0)=\varphi_{W;t_0-t'}\varphi_{X';t'}(x_0).
\end{equation}
This completes the proof of the lemma as well as of the theorem.
\end{proof}


\begin{thebibliography}{99}
%\bibitem{AW76} R. Azencott, E. Wilson, Homogeneous manifolds with negative curvature I, Trans. Amer. Math. Soc., 215 (1976), 323-362.

\bibitem{BCS00} D. Bao, S. S. Chern and Z. Shen, An Introduction to
Riemann-Finsler Geometry, Springer-Verlag, New York, 2000.

\bibitem{BN081} V. N. Berestovskii, Yu. G. Nikonorov, Killing vector fields of constant length
on locally symmetric Riemannian manifolds, Transformation Groups, 13  (2008),
25每45.

\bibitem{BN082} V. N. Berestovskii and Yu.G. Nikonorov, On $\delta$-homogeneous Riemannian manifolds, Diff. Geom. Appl., 26 2008, 514每535.

\bibitem{BN09}  V. N. Berestovskii and Yu.G. Nikonorov, Clifford-Wolf homogeneous Riemannian manifolds, Jour. Differ. Geom., 82 (2009), 467-500.

\bibitem{BR04} D. Bao and C. Robles, Ricci and flag curvatures in Finsler geometry,
A sampler of Riemann-Finsler geometry (D. Bao, R. Bryant, S. S.
Chern, and Z. Shen, eds.), Cambridge university press, 2004,
197-260.

\bibitem{BRS04} D. Bao, C. Robles and Z. Shen, Zermelo navigation on
Riemannian manifolds, Jour. Differ. Geom., 66 (2004), 377-435.

%\bibitem{CS04}  S. S. Chern, Z. Shen, Riemann-Finsler Geometry,
%World Scientific Publishers, 2004.

\bibitem{DH02} S. Deng and Z. Hou, The group of isometries of a Finsler space, Pacific J. Math, 207 (2002), 149-157.

\bibitem{DM1} S. Deng and M. Xu, Clifford-Wolf translations of Finsler spaces, to appear in Forum Math., [Math.DG] arXiv:1201.3714v1.

\bibitem{DM2} S. Deng and M. Xu, Clifford-Wolf translations of homogeneous Randers spheres, preprint, [Math. DG] arXiv: 1204.5232.

\bibitem{DM3} S. Deng and M. Xu, Clifford-Wolf translations of left invariant Randers metrics on compact Lie groups, [Math. DG] arXiv: 1204.5233.

%\bibitem{DMW86} I. Dotti-Miatello, R. Miatello and Wolf, Bounded isometries and homogeneous Riemannian quotient manifolds,  Geom. Dedicata 21 (1986), 21每27.

%\bibitem{DP} S. Deng, Clifford-Wolf translations of Finsler spaces of Finsler spaces of negative flag curvature,
%preprint.

%\bibitem{DR83} M. J. Druetta, Clifford translations in manifolds without focal points, Geom. Dedicata 14 (1983),
%95每103.

\bibitem{FR63} H. Freudenthal, Clifford-Wolf-isometrien symmetrischer ra\"{u}me, Math. Ann., 150 (1963), 136-149.


%\bibitem{HE74} E. Heintze, On homogeneous manifolds of negative curvature, Math. Ann., 211 (1974), 23-34.

% \bibitem[OZ69]{Oz69} V. Ozols, Critical points of the displacement function of an isometry, J. Differential Geometry,  3 (1969), 411每432.

\bibitem{OZ74} V. Ozols, Clifford translations of symmetric spaces, Proc. Amer. Math. Soc. 44 (1974),
169每175.

%\bibitem{RA} H. B. Rademacher, Nonreversible Finsler spaces of positive flag curvature, In: A smaple of Finsler geometry, eds: D. Bao, R. Bryant, S. S. Chern, Z. Shen, MSRI Publ. 50, 2004, 261-302.

\bibitem{RA41} G. Randers,  On an asymmetrical metric in the four-space of
general relativity, Phys. Rev., 59 (1941), 195-199.

\bibitem{SH02} Z. Shen, Finsler spaces with $K=0$ and $S=0$, Canadian J. Math., 55 (2003), 112-132.

%\bibitem[SH01]{SH2} Z. Shen, Differential Geometry of Sprays and Finsler Spaces, Kluwer, Dordrent, 2001.

\bibitem{WHD} H. Wang, L. Huang and  S. Deng, Homogeneous Einstein-Randers metrics on spheres, Nonlinear Analysis, 74 (2011), 6295-6301.

\bibitem{WO60} J. A. Wolf, Sur la classification des varietes riemanniennes homogenes a courbure constante, C. R. Math. Acad. Sci. Paris,  250 (1960),
3443-3445.

\bibitem{WO62} J. A. Wolf, Locally symmetric homogeneous spaces, Commentarii Mathematici Helvetici,  37 (1962/63),
65-101.

\bibitem{WO64} J. A. Wolf, Homogeneity and bounded isometries in manifolds of negative curvature, Illinois J. Math., 8 (1964), 14-18.

\end{thebibliography}
\end{document}